\newcommand{\RR}{\mathbb{R}}
\newcommand{\rank}{\textup{rank}\,}
\newcommand{\rankpsd}{\textup{rank}_{\textup{psd}}\,}
\renewcommand{\S}{\mathcal S} 
\newtheorem{theorem}{Theorem}
\newtheorem{lemma}[theorem]{Lemma}
\newtheorem{proposition}[theorem]{Proposition}
\theoremstyle{definition}
\newtheorem{definition}[theorem]{Definition}
\theoremstyle{remark}
\newtheorem{remark}[theorem]{Remark}
\title{Rational and real positive semidefinite rank can be different}
\author[J. Gouveia]{Jo{\~a}o Gouveia}
\address{CMUC, Department of Mathematics,
  University of Coimbra, 3001-454 Coimbra, Portugal}
\email{jgouveia@mat.uc.pt}
\author[H. Fawzi]{Hamza Fawzi} \address{Laboratory for Information and
  Decision Systems (LIDS), Massachusetts Institute of Technology,
  Cambridge, MA 02139, USA} \email{hfawzi@mit.edu}
\author[R.Z. Robinson]{Richard Z. Robinson}
\address{Department of Mathematics, University of Washington, Box
  354350, Seattle, WA 98195, USA} \email{rzr@uw.edu}
\begin{document}

\begin{abstract}
Given a nonnegative matrix $M$ with rational entries, we consider two quantities: the usual positive semidefinite (psd) rank, where the matrix is factored through the cone of real symmetric psd matrices, and the rational-restricted psd rank, where the matrix factors are required to be rational symmetric psd matrices.  It is clear that the rational-restricted psd rank is always an upper bound to the usual psd rank.  We show that this inequality may be strict by exhibiting a matrix with psd rank four whose rational-restricted psd rank is strictly greater than four.
\end{abstract}

\maketitle

The \emph{positive semidefinite (psd) rank} of a matrix was introduced in \cite{gouveia2011lifts} (see also \cite{fiorini2012lowerbound}).  In this note, we answer a basic structural question about the psd rank: if a nonnegative matrix $M$ has only rational entries, can the psd rank of $M$ always be achieved by a factorization using only rational matrices?  We answer this question negatively by providing an example of a rational matrix with psd rank four such that every psd factorization of size four uses irrational numbers.  
Note that the analogous question for the \emph{nonnegative rank} of a matrix was posed by Cohen and Rothblum in \cite{cohen1993nonnegative}.  It was shown in \cite{cohen1993nonnegative} that all rational matrices with nonnegative rank two admit a rational nonnegative factorization, but the question for general nonnegative matrices remains open.

Let $\S^k_+$ denote the cone of real symmetric $k \times k$ psd matrices.  The psd rank of a nonnegative matrix is defined as follows:
\begin{definition}
\label{def:psdfactorization}
Given a nonnegative matrix $M \in \RR^{p\times q}_+$, a \emph{psd factorization} of $M$ of size $k$ is a collection of psd matrices $A_1,\dots,A_p \in \S^k_+$ and $B_1,\dots,B_q \in \S^k_+$ such that $M_{ij} = \langle A_i, B_j \rangle$ for all $i=1,\dots,p$ and $j=1,\dots,q$, where the inner product is the standard trace inner product on symmetric matrices.\\
The \emph{psd rank} of $M$, denoted $\rankpsd M$, is the smallest integer $k$ for which $M$ admits a psd factorization of size $k$.
\end{definition}

The proof of our example will require a lemma about rational psd matrices of rank one.  Any rank one psd matrix has the form $\mathbf{v} \mathbf{v}^T$ for some vector $\mathbf{v}$.  Let $\phi$ denote the map taking the vector $\mathbf{v}$ to the psd matrix $\mathbf{v} \mathbf{v}^T$.  Then we have the following.

\begin{lemma}
\label{lem:rational-rankone}
If the matrix $\phi (\mathbf{v})$ is composed of only rational entries, then $\mathbf{v}$ has the form $\alpha \mathbf{q}$ where $\alpha$ is a real scalar and $\mathbf{q}$ is a rational vector.
\end{lemma}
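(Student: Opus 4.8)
The plan is to isolate a single nonzero coordinate of $\mathbf{v}$ and promote it to the role of the scalar $\alpha$. Write $\mathbf{v} = (v_1, \dots, v_n)$, so that $\phi(\mathbf{v})_{ij} = v_i v_j$. First I would dispose of the trivial case $\mathbf{v} = 0$: here one may take $\alpha = 0$ and $\mathbf{q}$ to be any rational vector (say the zero vector). Otherwise, fix an index $k$ with $v_k \neq 0$. Then the diagonal entry $\phi(\mathbf{v})_{kk} = v_k^2$ is a \emph{nonzero} rational number, and for every index $i$ the entry $\phi(\mathbf{v})_{ik} = v_i v_k$ is rational by hypothesis.

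From these two facts the conclusion is immediate: for each $i$ the quotient $v_i/v_k = (v_i v_k)/v_k^2$ is a ratio of two rational numbers, hence rational. Setting $\alpha := v_k \in \RR$ and $q_i := v_i/v_k \in \QQ$ for all $i$, we obtain $\mathbf{v} = \alpha \mathbf{q}$ with $\mathbf{q} = (q_1,\dots,q_n)$ rational. The only step demanding any care at all is dividing by a coordinate known to be nonzero, which is precisely why a nonzero coordinate is chosen at the outset; beyond that the argument is purely elementary and uses nothing about psd-ness except that $\phi(\mathbf{v})$ is the rank-one outer product $\mathbf{v}\mathbf{v}^T$. I do not anticipate any real obstacle here.
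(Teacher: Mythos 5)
Your argument is correct and is essentially the paper's own proof: both isolate a nonzero coordinate $v_k$, use the rationality of the diagonal entry $v_k^2$ and of the entries $v_i v_k$ to conclude that each ratio $v_i/v_k$ is rational, and then take $\alpha = v_k$. There is nothing to add.
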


\begin{proof}
Suppose that $\mathbf{v}$ is a nonzero vector (else the conclusion is immediate).  Then, without loss of generality, we may assume that the first coordinate $v_1$ is nonzero.  Since $v_1^2$ is an entry in the matrix $\phi (\mathbf{v})$, it must be rational. Hence, the matrix $\frac{1}{v_1^2} \phi (\mathbf{v})$ is also rational.  By looking at the first row of this matrix, we see that the vector $\left( 1, \frac{v_2}{v_1}, \frac{v_3}{v_1}, \ldots,\frac{v_r}{v_1} \right)$ is rational.  Now we just scale this rational vector by $v_1$ to finish the proof.
\end{proof}

Our candidate matrix $M$ is the 
$8 \times 6$ matrix shown in Figure~\ref{fig:cuboid_slack}.
Readers who are familiar with slack matrices may be interested to know that $M$ arises as a slack matrix of the polytope with vertices $(0,0,0)$, $(1,0,0)$, $(0,1,0)$, $(1,2,0)$, $(0,0,1)$, $(1,0,1)$, $(0,1,1)$, and $(1,2,1)$.  Readers who are not familiar with slack matrices need not worry, as we will refrain from using any results about slack matrices in the proofs.

\begin{figure}
  \qquad \qquad
	\begin{subfigure}[h]{0.4\textwidth}
		\[ M = \left( \begin{array}{cccccc}
		0 & 0 & 2 & 1 & 0 & 1 \\
		1 & 0 & 0 & 2 & 0 & 1 \\
		0 & 1 & 2 & 0 & 0 & 1 \\
		1 & 2 & 0 & 0 & 0 & 1 \\
		0 & 0 & 2 & 1 & 1 & 0 \\
		1 & 0 & 0 & 2 & 1 & 0 \\
		0 & 1 & 2 & 0 & 1 & 0 \\
		1 & 2 & 0 & 0 & 1 & 0 \end{array} \right)\]
	\end{subfigure}
	\begin{subfigure}[h]{0.5\textwidth}
		\includegraphics[width=0.5\textwidth]{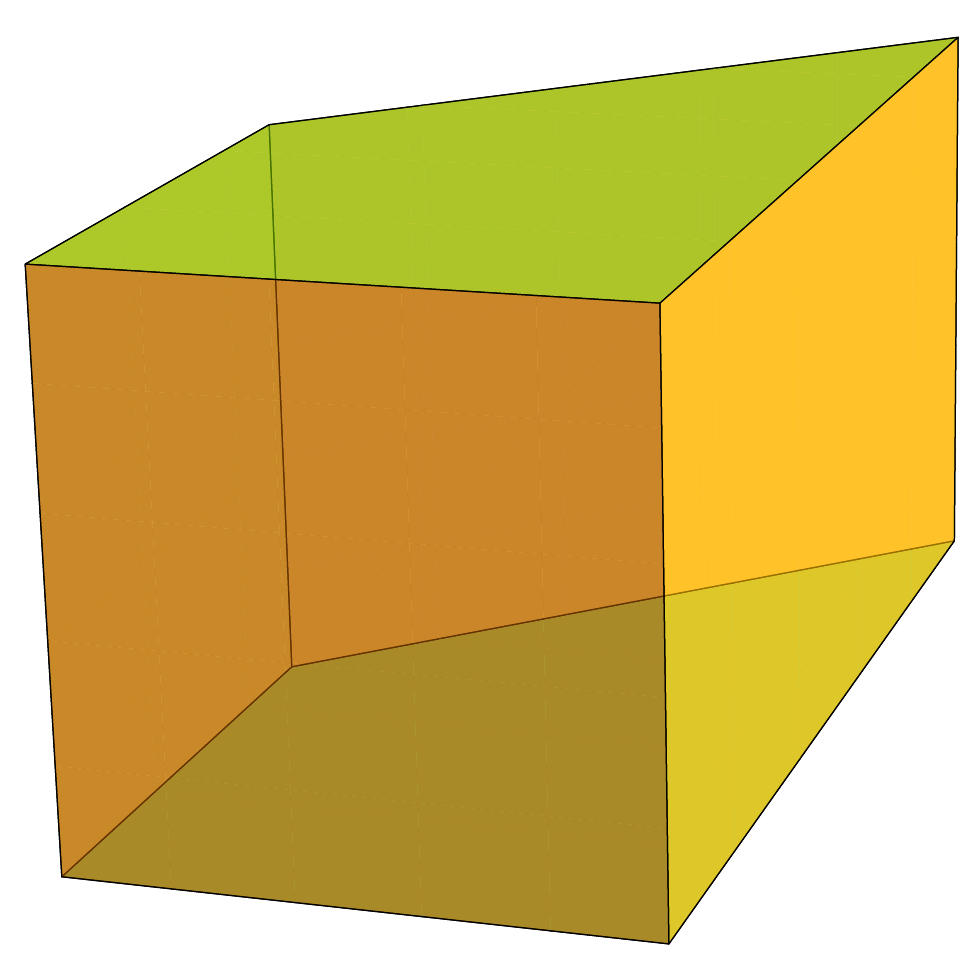}
	\end{subfigure}
\caption{Our example matrix is a slack matrix of a three dimensional polytope.}
\label{fig:cuboid_slack}
\end{figure}

During our analysis of this example, we will require a few results about psd rank found in the literature.  We summarize these results in the following proposition.

\begin{proposition}\label{prop:quotes}
\quad 
\begin{enumerate}
\item \cite[Prop. 5]{gouveia2011lifts} If $\sqrt{A}$ is an entry-wise square root of $A$, then $\rankpsd A \leq \rank \sqrt{A}$.
\item \cite[Cor. 4.8]{leetheis2012support}, \cite[Prop. 2.6]{gouveia2013polytopes} If $A$ contains a $k \times k$ triangular submatrix $T$, then $\rankpsd A \geq k$.  Furthermore, in a psd factorization of $A$ of size $k$, the factor corresponding to the row (or column) of $T$ with $k-1$ zeros must have rank one.
\end{enumerate}
\end{proposition}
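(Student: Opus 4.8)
Both items are imported from the literature, so a proof here just means recalling the argument in each case. For \emph{(1)} the plan is to convert a minimal rank factorization of $\sqrt A$ into a rank-one psd factorization of $A$. Write $\sqrt A = UV^T$ with $U\in\RR^{p\times r}$, $V\in\RR^{q\times r}$ and $r=\rank\sqrt A$, and let $u_1,\dots,u_p$ and $v_1,\dots,v_q$ in $\RR^r$ denote the rows of $U$ and $V$. Then $(\sqrt A)_{ij}=\langle u_i,v_j\rangle$, and since $\langle xx^T,yy^T\rangle=(x^Ty)^2$ we obtain $A_{ij}=\langle u_i,v_j\rangle^2=\langle u_iu_i^T,\,v_jv_j^T\rangle$. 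Thus $A_i:=u_iu_i^T$ and $B_j:=v_jv_j^T$ form a psd factorization of $A$ of size $r$, so $\rankpsd A\le r$. There is essentially no obstacle here; it is a single identity.

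For \emph{(2)} I would first permute the rows and columns of $A$ so that $T$ sits in the top-left $k\times k$ corner with $T_{ii}\ne0$ and $T_{ij}=0$ whenever $i>j$; in this arrangement row $k$ of $T$ has $k-1$ zeros and column $1$ of $T$ has $k-1$ zeros. Suppose $A$ --- hence, by restriction, also $T$ --- has a psd factorization of size $d$, written $T_{ij}=\langle A_i,B_j\rangle$ with $A_i,B_j\in\S^d_+$. The key tool is the standard fact that for psd matrices $X,Y$ one has $\langle X,Y\rangle=0$ if and only if $\range X\perp\range Y$. Using it, $\langle A_i,B_j\rangle=0$ for all $i>j$ forces $\range(B_j)\subseteq\bigl(\sum_{i=j+1}^{k}\range(A_i)\bigr)^\perp$, whereas $\langle A_j,B_j\rangle\ne0$ forces $\range(B_j)\not\subseteq\range(A_j)^\perp$; hence $\sum_{i=j}^{k}\range(A_i)$ strictly contains $\sum_{i=j+1}^{k}\range(A_i)$ for every $j$. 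Consequently $\dim\sum_{i=1}^{k}\range(A_i)>\dim\sum_{i=2}^{k}\range(A_i)>\dots>\dim\range(A_k)\ge1$ is a strictly decreasing chain of $k$ subspaces of $\RR^d$, which forces $d\ge k$ and therefore $\rankpsd A\ge k$. If moreover $d=k$, this chain of dimensions must be exactly $k,k-1,\dots,1$, so $\dim\range(A_k)=1$, i.e.\ the factor $A_k$ attached to the row with $k-1$ zeros has rank one; running the same argument with the roles of rows and columns interchanged (equivalently, applying it to $T^T$) shows the factor $B_1$ attached to the column with $k-1$ zeros has rank one.

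The one step I would justify carefully is the characterization $\langle X,Y\rangle=0\iff\range X\perp\range Y$ for psd $X,Y$ --- which follows from $\langle X,Y\rangle=\trace(XY)=\trace(X^{1/2}YX^{1/2})\ge0$, with equality exactly when $X^{1/2}YX^{1/2}=0$, equivalently $XY=0$ --- and its translation into the range containments above. Everything after that is bookkeeping with a strictly decreasing flag of subspaces, so I do not anticipate a genuine difficulty; this is precisely why the statement can be quoted wholesale from the references.
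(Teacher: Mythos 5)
Your reconstruction is correct: item (1) is the standard ``square the rank factorization'' argument, and item (2) is the standard chain-of-ranges argument based on the fact that $\langle X,Y\rangle=0$ for psd $X,Y$ forces $\range X\perp\range Y$, including the refinement that when the factorization size equals $k$ the dimensions must step down by exactly one, giving the rank-one conclusion. The paper itself offers no proof of this proposition --- it is quoted wholesale from the cited references --- and your arguments match the ones given there, so there is nothing to compare beyond noting agreement.
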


Now we begin our analysis of the matrix $M$.

\begin{lemma}\label{lem:Mprops}
We have that $\rankpsd M = 4$ and any psd factorization of $M$ of size four uses only rank one factors.
\end{lemma}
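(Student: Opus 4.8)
The plan is to prove the three assertions in turn: $\rankpsd M \le 4$, $\rankpsd M \ge 4$, and the rank-one conclusion for size-four factorizations.

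For the upper bound I would apply part (1) of Proposition~\ref{prop:quotes} to the \emph{positive} entrywise square root $\sqrt M$ of $M$ (each $2$ becomes $\sqrt2$, each $1$ stays $1$, each $0$ stays $0$). Writing $c_1,\dots,c_6$ for its columns, one checks the two identities
\[
c_1+\tfrac{1}{\sqrt2}\,c_3=c_5+c_6,\qquad c_2+c_4=\sqrt2\,c_1+\tfrac{1}{\sqrt2}\,c_3 .
\]
These are two independent linear relations among the six columns, so $\rank\sqrt M\le 4$, and hence $\rankpsd M\le 4$.

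For the lower bound I would exhibit a $4\times4$ triangular submatrix of $M$: the rows $1,2,3,5$ and columns $6,1,2,5$ give
\[
\begin{pmatrix} 1&0&0&0\\ 1&1&0&0\\ 1&0&1&0\\ 0&0&0&1\end{pmatrix},
\]
which is lower triangular with nonzero diagonal, so part (2) of Proposition~\ref{prop:quotes} yields $\rankpsd M\ge 4$, and therefore $\rankpsd M = 4$.

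Finally, for the rank-one statement I would invoke the ``furthermore'' clause of part (2) repeatedly. A $4\times4$ lower-triangular submatrix has a first row and a last column each carrying three zeros, so each such submatrix found inside $M$ forces one row factor $A_i$ and one column factor $B_j$ to have rank one in every size-four factorization of $M$. It therefore suffices to list $4\times4$ triangular submatrices of $M$ so that every row index occurs as a ``first row'' and every column index occurs as a ``last column''. The submatrix above already handles row $1$ and column $5$; the remaining bookkeeping is lightened by the automorphism of $M$ realized by the permutation $(1\,5)(2\,6)(3\,7)(4\,8)$ of rows together with the transposition $(5\,6)$ of columns (columns $1,2,3,4$ being fixed), which carries triangular submatrices to triangular submatrices. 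Thus it is enough to produce by hand triangular submatrices realizing, say, the (first row, last column) pairs $(2,1),(3,2),(4,3),(2,4)$, and to let the automorphism supply rows $5,6,7,8$ and column $6$. Producing this short list is a routine inspection of the zero pattern of $M$; I expect this to be the only computational step, and it is the main (though mild) obstacle of the proof. Once all fourteen factors are known to have rank one, none of them can be the zero matrix, since $M$ has no zero row or column, so every factor has rank exactly one.
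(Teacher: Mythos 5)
Your proposal is correct and follows essentially the same route as the paper: part (1) of Proposition~\ref{prop:quotes} applied to the entrywise square root for the upper bound, a $4\times4$ triangular submatrix for the lower bound, and repeated use of the ``furthermore'' clause (one triangular submatrix per row and per column of $M$) for the rank-one claim. Your two column relations, your triangular submatrix on rows $1,2,3,5$ and columns $6,1,2,5$, and the automorphism $(1\,5)(2\,6)(3\,7)(4\,8)$ with $(5\,6)$ all check out, and the remaining ``routine inspection'' you defer is exactly the step the paper also leaves to inspection.
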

\begin{proof}
One can verify that the all-nonnegative entry-wise square root of $M$ has usual rank four. Thus Proposition~\ref{prop:quotes} says that $\rankpsd M \leq 4$. Consider the submatrix of $M$ indexed by rows 1, 5, 7, and 8 and columns 1, 2, 5, and 6.  This submatrix is triangular so Proposition~\ref{prop:quotes} tells us two things:  First, we have that $\rankpsd M \geq 4$ and, hence, $\rankpsd M = 4$.  Second, the factors corresponding to the first row and first column in a psd factorization of $M$ of size four must always be rank one.  It is easy to verify by inspection that for every row and column of $M$ we can find a $4 \times 4$ triangular submatrix such that the row or column in question has three zeros in that submatrix.  Thus, repeatedly applying the proposition completes the proof.
\end{proof}
\begin{remark}
Note that Lemma \ref{lem:Mprops} is actually a consequence of \cite[Prop. 3.2]{gouveia2013polytopes} since our polytope has minimal psd rank (equal to the ambient dimension plus one) and thus any psd factorization must consist entirely of rank-one factors.
\end{remark}

The next proposition shows that no rational psd factorization of $M$ can have size four.

\begin{proposition}
We have that $\rankpsd M = 4$, but there does not exist a psd factorization of size four using only rational matrices. 
\end{proposition}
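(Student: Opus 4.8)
The statement $\rankpsd M = 4$ is exactly the content of Lemma~\ref{lem:Mprops}, so the plan is to prove the remaining assertion: assuming a size-four psd factorization with rational matrices exists, derive a contradiction. First I would invoke Lemma~\ref{lem:Mprops} once more to conclude that in such a factorization every factor has rank one, so I may write $A_i = a_i a_i^{T}$ and $B_j = b_j b_j^{T}$ with $a_i, b_j \in \RR^{4}$. Since no row and no column of $M$ is identically zero, every $a_i$ and every $b_j$ is nonzero. Rationality of $A_i$ and $B_j$ together with Lemma~\ref{lem:rational-rankone} then gives $a_i = \alpha_i q_i$ and $b_j = \beta_j r_j$, where $q_i, r_j$ are nonzero rational vectors and $\alpha_i, \beta_j$ are nonzero reals. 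Inspecting a diagonal entry of $A_i = \alpha_i^{2} q_i q_i^{T}$ in a coordinate where $q_i$ does not vanish shows $\alpha_i^{2} \in \QQ$; similarly $\beta_j^{2} \in \QQ$. Set $s_i := \alpha_i^{2} > 0$ and $t_j := \beta_j^{2} > 0$, both rational.

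Next I would expand the defining identity of the factorization:
\[
M_{ij} \;=\; \langle a_i a_i^{T},\, b_j b_j^{T}\rangle \;=\; (a_i^{T} b_j)^{2} \;=\; s_i\, t_j\, c_{ij}^{2}, \qquad c_{ij} := q_i^{T} r_j \in \QQ .
\]
Because $s_i, t_j > 0$, we have $M_{ij} = 0$ precisely when $c_{ij} = 0$. The key consequence is that whenever two rows $i_1, i_2$ and two columns $j_1, j_2$ are chosen so that all four entries $M_{i_1 j_1}, M_{i_1 j_2}, M_{i_2 j_1}, M_{i_2 j_2}$ are nonzero, the factors $s_i, t_j$ cancel and
\[
\frac{M_{i_1 j_1}\, M_{i_2 j_2}}{M_{i_1 j_2}\, M_{i_2 j_1}} \;=\; \left(\frac{c_{i_1 j_1}\, c_{i_2 j_2}}{c_{i_1 j_2}\, c_{i_2 j_1}}\right)^{2} ,
\]
which forces this ratio to be the square of a nonzero rational number.

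To conclude, I would exhibit a $2\times 2$ all-nonzero submatrix of $M$ violating this: rows $1$ and $2$ share exactly columns $4$ and $6$ in their support, and the corresponding submatrix has entries $M_{14}=M_{16}=M_{26}=1$, $M_{24}=2$, so the ratio above equals $\tfrac{1\cdot 1}{1\cdot 2} = \tfrac12$, which is not a square in $\QQ$. This contradiction finishes the proof.

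I do not expect a genuine obstacle: the structural work — reduction to rank-one factors, and the fact that rational rank-one psd matrices come from scalar multiples of rational vectors — is already carried out in Lemmas~\ref{lem:Mprops} and~\ref{lem:rational-rankone}, and everything after is elementary arithmetic. The two points that need a little care are the verification that $\alpha_i^{2}$ and $\beta_j^{2}$ are rational (which relies on the $q_i, r_j$ being truly nonzero, i.e., on $M$ having no zero rows or columns) and the selection of the $2\times 2$ submatrix: most $2\times 2$ submatrices of $M$ have ratio $1$, $\tfrac14$ or $4$ and so are harmless, so one must locate one whose four nonzero entries include an odd number of $2$'s — the submatrix on rows $\{1,2\}$, columns $\{4,6\}$ does the job.
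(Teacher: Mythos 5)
Your proof is correct and follows essentially the same route as the paper: reduce to rank-one factors via Lemma~\ref{lem:Mprops}, apply Lemma~\ref{lem:rational-rankone}, and derive a contradiction from the $2\times 2$ submatrix on rows $1,2$ and columns $4,6$. The only cosmetic difference is that you phrase the contradiction multiplicatively (the cross-ratio $M_{14}M_{26}/(M_{16}M_{24}) = 1/2$ would have to be a rational square), whereas the paper works with the entrywise square root $S$ and shows $\sqrt{2}\,\alpha_2\beta_1$ would have to be rational with $\alpha_2\beta_1$ rational; your version neatly sidesteps the sign ambiguities but rests on the same underlying argument.
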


\begin{proof}
Suppose, by way of contradiction, that $\left(A_1,\ldots,A_8,B_1,\ldots,B_6 \right)$ is a psd factorization of $M$ of size four that uses only rational matrices.  By Lemma~\ref{lem:Mprops}, each matrix must be rank one.  Thus, there exist vectors $\mathbf{a_1},\ldots,\mathbf{a_8}$ and $\mathbf{b_1},\ldots,\mathbf{b_6}$ such that $A_i = \phi(\mathbf{a_i})$ and $B_j = \phi (\mathbf{b_j})$.  
Furthermore, by the properties of the trace, we must have that $M_{ij} = \langle A_i, B_j \rangle = \langle \mathbf{a_i}, \mathbf{b_j} \rangle^2$.
Thus, the matrix whose $(i,j)$th entry is given by $\langle \mathbf{a_i}, \mathbf{b_j} \rangle$ is an entry-wise square root of $M$, which we denote by $S$.  By looking at the submatrix generated by the first two rows and the fourth and sixth columns, we see that $S$ contains a submatrix $\widetilde{S}$ of the form
$$ \left( \begin{array}{cc}
\pm 1 & \pm 1 \\ \pm \sqrt{2} & \pm 1 \end{array} \right)$$
where there is ambiguity on the sign of each entry.

Now by Lemma \ref{lem:rational-rankone}, each $\mathbf{a_i}$ and $\mathbf{b_j}$ must be a rational vector scaled by a nonzero real number.  Hence, there must exist nonzero real numbers $\alpha_1, \alpha_2, \beta_1, \beta_2$ such that the matrix resulting from the product
$$ \left( \begin{array}{cc}
\alpha_1 & 0 \\ 0 & \alpha_2 \end{array} \right)
\cdot
\widetilde{S}
\cdot
\left( \begin{array}{cc}
\beta_1 & 0 \\ 0 & \beta_2 \end{array} \right) = 
\left( \begin{array}{cc}
\pm \alpha_1 \beta_1 & \pm \alpha_1 \beta_2 \\ \pm \sqrt{2} \alpha_2 \beta_1 & \pm \alpha_2 \beta_2 \end{array} \right) $$
is rational.  It is easy to see that if $\alpha_1 \beta_1$, $\alpha_1 \beta_2$, and $\alpha_2 \beta_2$ are rational, then $\alpha_2 \beta_1$ must also be rational, which results in a contradiction.
\end{proof}

\bibliography{psdrank}
\bibliographystyle{alpha}

\end{document}